\makeatletter \@namedef{subjclassname@2010}{%
  \textup{2010} Mathematics Subject Classification}
\newtheorem{Theorem}{Theorem}
\newtheorem*{MainTheorem}{Main Theorem}
\newtheorem{Lemma}{Lemma}
\newtheorem{Corollary}{Corollary}
\newtheorem{Problem}{Problem}
\newtheorem*{Definition}{Definition}
\newcommand{\FF}[0]{\mathbb F}
	\newcommand{\ZZ}[0]{\mathbb Z}
\newcommand{\cB}[0]{\mathcal B}	
	\newcommand{\cP}[0]{\mathcal P}
	\newcommand{\cQ}[0]{\mathcal Q}
\newcommand{\pp}[0]{\textbf{\textit{p}}}
\newcommand{\ee}[0]{\textbf{\textit{e}}}
\newcommand{\eps}[0]{\varepsilon}
\newcommand{\leg}[2]{\left(\frac{#1}{#2}\right)}
\renewcommand{\mod}[0]{\text{ mod }}
\newcommand{\lr}[1]{\left(#1\right)}
\renewcommand{\phi}{\varphi}
\begin{document}


\baselineskip=17pt



\title{The Least Number with Prescribed Legendre Symbols}

\author[Brandon Hanson]{Brandon Hanson} \address{Pennsylvania State University\\
University Park, PA}
\email{bwh5339@psu.edu}

\author[Robert C. Vaughan]{Robert C. Vaughan} \address{Pennsylvania State University\\
University Park, PA}
\email{rcv4@psu.edu}

\author[Ruixiang Zhang]{Ruixiang Zhang} \address{Princeton University\\
Princeton, NJ}
\email{ruixiang@math.princeton.edu}
\date{}

\begin{abstract}
In this article we estimate the number of integers up to $X$ which can be properly represented by a positive-definite, binary, integral quadratic form of small discriminant. This estimate follows from understanding the vector of signs that arises from computing the Legendre symbol of small integers $n$ at multiple primes.
\end{abstract}

\maketitle
\section{Introduction}

A well-known and outstanding problem in number theory is the estimation of the least quadratic non-residue modulo a prime $p$. Recall, the least quadratic non-residue modulo $p$ is the integer
\[n_p=\min\left\{n>1:\leg{n}{p}=-1\right\},\] where $\leg{x}{p}$ is the Legendre symbol of $x$ modulo $p$. Vinogradov conjectured that for any $\eps>0$, $n_p$ should be at most $p^\eps$ provided $p$ is sufficiently large relative to $\eps$. This conjecture is still open, however Linnik proved that any exceptions to it are sparse. Specifically, he proved the conjecture holds for all but $O(\log\log N)$ of the primes $p\leq N$, with the implied constant depending only on $\eps$.

Now suppose $p_1$ and $p_2$ are distinct, odd primes and $n$ is an integer not divisible by either. There are four possibilities for the vector $\lr{\leg{n}{p_1},\leg{n}{p_2}}$. How large must $N$ be so that all four vectors are realized by integers bounded by $N$? In general, one might ask the following.

\begin{Problem}\label{MainProblem}
Given distinct odd primes $p_1,\ldots,p_k$, how large must $N$ be before one has seen each of the $2^k$ different vectors of signs in $\{1,-1\}^k$ realized by a vector of the from
\[\lr{\leg{n}{p_1},\ldots,\leg{n}{p_k}}\] with $1\leq n\leq N$?
\end{Problem}
We believe that the above problem is intrinsically interesting, but there is further motivation studying it. Recall that the quadratic form $F(x,y)=Ax^2+Bxy+Cy^2$ has discriminant $d=B^2-4AC$, is said to represent $q$ if $F(x,y)=q$ has a solution $(x,y)\in\ZZ^2$ with $x$ and $y$, and is said to properly represent $q$ if furthermore $x$ and $y$ can be taken to be relatively prime. We say the form is definite if $d<0$. One could then ask, 

\begin{Problem}\label{LeastDiscriminant}
What is the least positive integer $d$ such that $q$ is represented by a quadratic form of discriminant $-d$?
\end{Problem}
If we allow for indefinite forms, which is to say forms with positive discriminant, then this problem is less interesting. A number is a difference of squares if and only if it is not congruent to $2 \mod 4$. Thus, either $q$ is not congruent to $2\mod 4$ and $q=x^2-y^2$ has a solution, or else $q/2$ is not congruent to $2\mod 4$ in which case $q=2x^2-2y^2$ has a solution. 

As is outlined below, answering Problem \ref{LeastDiscriminant} essentially amounts to the solution of Problem \ref{MainProblem}, because of the following theorem (see for instance \cite[Proposition 4.1]{B}).
\begin{Theorem}\label{Discriminant}
The number $q$ is properly represented by a binary quadratic form of discriminant $d$ if and only if $d$ is a square modulo $4q$.
\end{Theorem}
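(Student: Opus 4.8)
The plan is to exploit the action of $SL_2(\ZZ)$ on integral binary quadratic forms: substituting the variables of $F(x,y)=Ax^2+Bxy+Cy^2$ by an integer matrix of determinant $\delta$ produces another integral form whose discriminant is multiplied by $\delta^2$, so the action of $SL_2(\ZZ)$ preserves the discriminant. The whole argument then rests on the dictionary between a \qu{proper representation of $q$ by $F$} and an \qu{$SL_2(\ZZ)$-transformation carrying $F$ to a form with leading coefficient $q$}.

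For the forward implication, suppose $F$ has discriminant $d=B^2-4AC$ and $F(x_0,y_0)=q$ with $\gcd(x_0,y_0)=1$. Choose $z_0,w_0\in\ZZ$ with $x_0w_0-y_0z_0=1$ and set $G(x,y)=F(x_0x+z_0y,\,y_0x+w_0y)$. Then $G$ is again an integral binary quadratic form, $\disc(G)=(x_0w_0-y_0z_0)^2\disc(F)=d$, and $G(1,0)=F(x_0,y_0)=q$, so $G(x,y)=qx^2+B'xy+C'y^2$ for some $B',C'\in\ZZ$. Reading off the discriminant, $d=B'^2-4qC'$, hence $d\equiv B'^2\pmod{4q}$, i.e. $d$ is a square modulo $4q$.

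Conversely, suppose $d\equiv b^2\pmod{4q}$ for some $b\in\ZZ$. Then $4q\mid b^2-d$, so $c:=(b^2-d)/(4q)$ is an integer, and $F(x,y)=qx^2+bxy+cy^2$ is an integral binary quadratic form with $\disc(F)=b^2-4qc=d$. Since $F(1,0)=q$ and $\gcd(1,0)=1$, this is a proper representation of $q$ by a form of discriminant $d$, as required.

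There is no genuine obstacle here: the statement is classical and the write-up is essentially bookkeeping. The only point that warrants a sentence of justification is the invariance $\disc(G)=\delta^2\disc(F)$ — either a direct expansion of $G$, or the Gram-matrix identity $\det(M^{\mathsf T}SM)=(\det M)^2\det S$ with $S$ the symmetric matrix of $F$ — and one should remark that the parity condition $d\equiv 0,1\pmod 4$ implicit on both sides is automatically consistent, since $4\mid 4q$ forces $d\equiv b^2\pmod 4$.
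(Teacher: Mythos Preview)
Your argument is correct and is precisely the classical proof: complete a proper representation $(x_0,y_0)$ to an $SL_2(\ZZ)$ matrix to put $q$ in the leading coefficient, then read off $d\equiv B'^2\pmod{4q}$; conversely, write down the form $qx^2+bxy+cy^2$ explicitly. The paper does not supply its own proof of this statement at all --- it merely quotes it as \cite[Proposition 4.1]{B} --- so there is nothing to compare against beyond noting that what you have written is exactly the standard textbook derivation one finds in Buell.
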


When $q$ is an odd prime and $q\equiv 3\mod 4$, Problem \ref{LeastDiscriminant} boils down to that of finding the least quadratic non-residue modulo $q$. But when handling Problem \ref{LeastDiscriminant} for general $q$, rather than have $\leg{-d}{q}=-1$, we require that $\leg{d}{p}=(-1)^\frac{p-1}{2}$ for each odd prime $p$ dividing $q$. Thus we are interested in prescribing the Legendre symbol of $d$ at several primes, which returns us to Problem \ref{MainProblem}. 

One goal of this article is to extend Linnik's result on the least non-residue and show that one can usually prescribe the sign of the Legendre symbol simultaneously at many primes with a small integer, barring some ``local'' obstructions as described in the next section.    

\section{Notation, preliminary observations, and statement of results}

To begin, let $k\geq 1$ be an integer and $p_1,\ldots,p_k\leq N$ be distinct odd primes. Let $\pp=(p_1,\ldots,p_k)$, $q=p_1\cdots p_k$ and write \[\leg{n}{\pp}=\lr{\leg{n}{p_1},\ldots,\leg{n}{p_k}}.\] Finally, denote by $G=\{\pm 1\}^k$ the (multiplicative) group of all $k$-tuples with entries $\pm 1$. As described in the introduction, we are interested in the number
\[n_q=\max_{\ee\in G}\min\left\{n:n\geq 1,\ \leg{n}{\pp}=\ee,\ n\equiv 1\mod 8\right\}.\] Thus $n_q$ is the least positive integer such that we observe all possible sign choices for $\leg{n}{\pp}$ with integers less than $n_q$ and congruent to $1\mod 8$. The congruence condition allows us to further insist that we deal only with squares modulo a power of two, which is needed in applications. 

It is convenient to identify the group $G$ with $\FF_2^k=(\ZZ/2\ZZ)^k$, the vector space of dimension $k$ over the field $\FF_2$, in the natural way. To be concrete, set \[U_q(y)=\{n:1\leq n\leq y,\ (n,q)=1,\ n\equiv 1\mod 8\}.\] Consider the map $\theta_q:U_q(y)\to \FF_2^k$ given by \[\theta_q(n)=\lr{\frac{1}{2}\lr{1-\leg{n}{p_1}},\ldots,\frac{1}{2}\lr{1-\leg{n}{p_k}}}\mod 2.\] The $i$'th entry of $\theta_q(n)$ is $1\mod 2$ if $n$ is a quadratic non-residue modulo $p_i$ and $0\mod 2$ if $n$ is a quadratic residue modulo $p_i$. The map $\theta_q$ is also an additive function in the sense that $\theta_q(mn)=\theta_q(m)+\theta_q(n)$, with the addition operation belonging to $\FF_2^k$. Moreover, for $y$ sufficiently large the map $\theta_q$ is surjective by Chinese Remainder Theorem and the fact that the primes $p_i$ are distinct and odd.

Suppose we know that the integers in $U_q(y)$ span $\FF_2^k$, in the sense that $\left\{\theta_q(n):n\in U_q(y)\right\}$ contains a basis of $\FF_2^k$. Then $n_q\leq y^k$. Indeed, any vector $\ee\in \FF_2^k$ is the sum of at most $k$ basis vectors, each of which is of the form $\theta_q(m)$ with $m\leq y$. The product of these integers $m$ gives an integer $n$ with $\theta_q(n)=\ee$. It is therefore sensible to consider the number
\[g_q=\min\left\{y:\left\{\theta_q(n):n\in U_q(y)\right\}\text{ spans }\FF_2^k\right\}.\] We record the above observation as a lemma.

\begin{Lemma}\label{Generation}
Let $q=p_1\cdots p_k$ be an odd, square-free integer. Then $n_q\leq g_q^k$.
\end{Lemma}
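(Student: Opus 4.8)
The statement to prove is Lemma~\ref{Generation}: if $q = p_1\cdots p_k$ is odd and square-free, then $n_q \le g_q^k$. The plan is to unwind the definitions of $g_q$ and $n_q$ and chain together the elementary observations already laid out in the paragraph preceding the lemma, turning them into a clean argument.

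First I would set $y = g_q$ and invoke the definition of $g_q$: the set $S = \{\theta_q(n) : n \in U_q(y)\}$ spans $\FF_2^k$, so it contains a basis $\theta_q(m_1), \ldots, \theta_q(m_k)$ of $\FF_2^k$, where each $m_j \in U_q(y)$, hence $1 \le m_j \le g_q$, $(m_j,q)=1$, and $m_j \equiv 1 \mod 8$. Next, fix an arbitrary target vector $\ee \in G \cong \FF_2^k$. Since the $\theta_q(m_j)$ form a basis, there is a subset $J \subseteq \{1,\ldots,k\}$ with $\sum_{j \in J} \theta_q(m_j) = \ee$ in $\FF_2^k$. Set $n = \prod_{j \in J} m_j$. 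Using the additivity $\theta_q(ab) = \theta_q(a) + \theta_q(b)$ noted in the excerpt, we get $\theta_q(n) = \sum_{j\in J}\theta_q(m_j) = \ee$, which unpacks to $\leg{n}{\pp} = \ee$. Moreover $n$ is a product of at most $k$ integers each congruent to $1 \bmod 8$, so $n \equiv 1 \bmod 8$; each is coprime to $q$, so $(n,q)=1$; and $n \le \prod_{j\in J} g_q \le g_q^{|J|} \le g_q^k$. (If $J = \emptyset$, then $\ee$ is the zero vector and $n=1$ works, with $1 \le g_q^k$.)

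This shows that for every $\ee \in G$ there is an integer $n$ with $1 \le n \le g_q^k$, $n \equiv 1 \bmod 8$, and $\leg{n}{\pp} = \ee$. Taking the minimum over such $n$ for each $\ee$ and then the maximum over $\ee \in G$ gives exactly $n_q \le g_q^k$ by the definition of $n_q$, completing the proof.

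There is essentially no serious obstacle here: the lemma is a bookkeeping consequence of the additivity of $\theta_q$ and the pigeonhole/basis observation. The only points requiring a moment's care are (i) confirming that the congruence condition $n \equiv 1 \bmod 8$ and the coprimality $(n,q)=1$ are preserved under taking products — both follow since these conditions are multiplicative — and (ii) handling the edge case $k$ small or $J$ empty, which is trivial. I would also remark that the bound is almost certainly far from tight (one expects $n_q$ to be much smaller than $g_q^k$), but sharpening it is not the business of this lemma; the lemma's role is simply to reduce bounding $n_q$ to bounding $g_q$, which is the quantity analyzed in the rest of the paper.
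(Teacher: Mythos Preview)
Your proof is correct and follows exactly the approach the paper takes: the paper's argument is the short paragraph immediately preceding the lemma (the lemma is stated as a record of that observation), and you have simply written it out in full, verifying the congruence, coprimality, and empty-subset details that the paper leaves implicit.
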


We have so far reduced the problem of bounding $n_q$ to that of bounding $g_q$. In the spirit of Linnik, we would like to estimate the number of $q$ for which $g_q$ is large. However, we need to be mindful of the following obstruction. If $g_d>y$ for some divisor $d$ of $q$ then $g_q>y$ as well. Thus, if $d$ is small and $g_d>y$, then $g_q>y$ for at least $[Q/d]$ numbers up to $Q$ (the multiples of $d$), which is substantial. So, we need to restrict our attention to what we will call \emph{eligible} $q$, namely those which are not divisible by some $d$ for which $g_d$ is large. In fact, for technical reasons, we have need to consider the number $g_{q,r}$ defined as
\[g_{q,r}=\min\left\{y:\left\{\theta_q(n):n\in U_{qr}(y)\right\}\text{ spans }\FF_2^k\right\}.\]
Notice that in this latter definition, we want to generate the full group of signs with numbers not just coprime to $q$ but also to $r$. 

\begin{Definition}
Let $y\geq 1$ be a parameter. We say $q$ is $y$-eligible if for each divisor $d$ of $q$ with $1<d<q$, we have $g_{d,q}\leq y$. Otherwise, we say $q$ is $y$-ineligible. If $g_q>y$ and $q$ is $y$-eligible, we say $q$ is $y$-exceptional.
\end{Definition}

So, an odd prime $p$ is always $y$-eligible, and is $y$-exceptional if $n_p>y$. It also becomes clear why we need to introduce the the notion of $g_{q,r}$: it may be that $g_d\leq y$ for each proper divisor $d$ of $q$, but in order to get this full set of generators, we must use numbers which are coprime to $d$ but not coprime to $q$. 

We now state our main results. 

\begin{MainTheorem}\label{LogCase}
Let $a\geq 3$ be fixed. Suppose $\cQ(Q,a)$ is the set of all integers $q\leq Q$ which are odd, square-free, and $(\log q)^a$-exceptional. Then, for $\delta>0$, we have \[\cQ(Q,a)\ll_{\delta,a} Q^{2/a+\delta}.\]
\end{MainTheorem}

Recall that the square-free radical of $q$ is $r=\prod_{p|q}p$.

\begin{Corollary}\label{Counting}
Let $\eps\in(0,1)$. There are positive numbers $a$ and $c$ which depend only on $\eps$ and such that the following holds. For $Q$ sufficiently large in terms of $\eps$, there are at least $c Q^\eps(\log\log Q)^{-1}$ numbers $q\in [Q,Q+Q^\eps]$ such that $g_{r}\leq (\log r)^a$, where $r$ is the square-free radical of $q$.
\end{Corollary}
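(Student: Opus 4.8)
\section*{Proof proposal for Corollary~\ref{Counting}}

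The plan is to produce the required integers by a sieve: discard from $[Q,Q+Q^\eps]$ every $q$ having a small prime factor, and then use the Main Theorem to show that almost all of the survivors satisfy $g_r\le(\log r)^a$, where $r$ is the square-free radical of $q$. Fix $a=a(\eps)$ large and $\delta=\delta(\eps)>0$ small (to be constrained in the last step), put $z=(\log Q)^{a+1}$ and $P_z=\prod_{p\le z}p$. First I would apply the fundamental lemma of sieve theory to the interval $[Q,Q+Q^\eps]$, sifted by the primes dividing $P_z$; since the length $Q^\eps$ of the interval far exceeds any fixed power of $z=Q^{o(1)}$, this yields
\[
\#\{q\in[Q,Q+Q^\eps]:(q,P_z)=1\}\ \gg\ Q^\eps\prod_{p\le z}\Bigl(1-\tfrac1p\Bigr)\ \asymp\ \frac{Q^\eps}{\log z}\ \asymp_\eps\ \frac{Q^\eps}{\log\log Q}
\]
by Mertens' theorem. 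Every such $q$ is odd, and its radical $r=\mathrm{rad}(q)$ is an odd, square-free number all of whose prime factors exceed $z$; it remains to remove the few survivors with $g_r>(\log r)^a$.

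Here is the mechanism that makes the sieve pay off. Choosing the exponent $a+1$ guarantees that $z>(\log 2Q)^a\ge(\log d)^a$ for every divisor $d\mid r$ once $Q$ is large, so every integer $n\le(\log r)^a$ is automatically coprime to $r$. Consequently, for each $d\mid r$ the relative invariant $g_{d,r}$ --- defined using integers coprime to $r$ --- and the absolute invariant $g_d=g_{d,d}$ are governed by exactly the same integers up to $z$, so $g_{d,r}\le(\log r)^a$ if and only if $g_d\le(\log r)^a$; and similarly $g_{d'',d}$ agrees with $g_{d''}$ below $z$ whenever $d''\mid d$. Now take any survivor $q$ with $g_r>(\log r)^a$, and let $d\mid r$ be minimal with $g_d>(\log d)^a$ (this set is nonempty, since $d=r$ lies in it). Minimality forces $g_{d''}\le(\log d'')^a\le(\log d)^a$ for every proper divisor $d''>1$ of $d$, hence $g_{d'',d}\le(\log d)^a$; thus $d$ is $(\log d)^a$-eligible, and together with $g_d>(\log d)^a$ this exhibits $d$ as an odd, square-free, $(\log d)^a$-exceptional number with $z<d\le r\mid q$.

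It follows that the survivors with $g_r>(\log r)^a$ are all divisible by some $(\log d)^a$-exceptional $d$ with $z<d\le 2Q$, so their number is at most
\[
\sum_{\substack{z<d\le 2Q\\ d\ (\log d)^a\text{-exceptional}}}\#\{q\in[Q,Q+Q^\eps]:d\mid q\}\ \le\ \sum_{\substack{z<d\le 2Q\\ d\ \text{exceptional}}}\Bigl(\frac{Q^\eps}{d}+1\Bigr).
\]
By the Main Theorem, $\#\{d\le X:d\ \text{is }(\log d)^a\text{-exceptional}\}=|\cQ(X,a)|\ll_{\delta,a}X^{2/a+\delta}$, so if $a$ is taken large enough in terms of $\eps$ (and $\delta$ small) that $2/a+\delta<\eps$ --- in particular $a\ge 3$, as the Main Theorem requires --- partial summation gives $\sum_{d>z,\ \text{exc}}1/d\ll z^{2/a+\delta-1}=(\log Q)^{-(a+1)(1-2/a-\delta)}$, while $\sum_{d\le 2Q,\ \text{exc}}1\ll Q^{2/a+\delta}$. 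Both of $Q^\eps(\log Q)^{-(a+1)(1-2/a-\delta)}$ and $Q^{2/a+\delta}$ are $o\bigl(Q^\eps/\log\log Q\bigr)$, so subtracting them from the sieve lower bound leaves at least $c\,Q^\eps(\log\log Q)^{-1}$ integers $q\in[Q,Q+Q^\eps]$ with $g_r\le(\log r)^a$, with $a$ and $c$ depending only on $\eps$.

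The main obstacle is the second paragraph: one must verify carefully that $z$-roughness really does let the relative invariants $g_{d,r}$ --- about which the Main Theorem says nothing directly --- be replaced by the absolute invariants $g_d$, and that the descent through ineligibility always terminates at an honestly $(\log d)^a$-exceptional divisor. This is also exactly where the factor $\log\log Q$ becomes unavoidable: small primes such as $p=3$ can themselves be $(\log p)^a$-exceptional, so a positive proportion of $[Q,Q+Q^\eps]$ must be deleted, and by Mertens' theorem deleting all primes up to $(\log Q)^{O(1)}$ costs a factor of order $\log\log Q$. Everything else --- the fundamental lemma in a short interval, and the partial-summation bookkeeping --- is routine given the Main Theorem.
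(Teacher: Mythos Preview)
Your proposal is correct and follows essentially the same approach as the paper: sieve out integers in $[Q,Q+Q^\eps]$ with a prime factor below roughly $(\log Q)^a$, use Mertens to count the survivors, show that any survivor whose radical $r$ has $g_r>(\log r)^a$ must be divisible by some $(\log d)^a$-exceptional $d$ exceeding a power of $\log Q$, and then apply the Main Theorem with partial summation. The only difference is cosmetic: where the paper packages the descent to an exceptional divisor into two auxiliary lemmas (Lemmas~\ref{Exceptional} and~\ref{Exceptional2}), you observe directly that $z$-roughness forces $U_{dr}(y)=U_d(y)$ for all $y<z$, so $g_{d,r}$ and $g_d$ coincide in the relevant range and one may simply take the minimal $d\mid r$ with $g_d>(\log d)^a$; this is a slightly cleaner route to the same conclusion.
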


As applications of the Main Theorem, we have the following corollaries. 

\begin{Corollary}\label{QuadraticForm}
Let $\eps>0$ and let $Q$ be sufficiently large in terms of $\eps$. There is an integer in the interval $[Q,Q+Q^\eps]$ which is properly represented by a definite, binary quadratic form of discriminant $-d$ with $d\leq Q^\eps$. 
\end{Corollary}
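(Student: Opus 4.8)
The plan is to translate the statement, via Theorem~\ref{Discriminant}, into the problem of prescribing Legendre symbols and then invoke Corollary~\ref{Counting}. By Theorem~\ref{Discriminant}, an integer $q$ is properly represented by a binary quadratic form of discriminant $-d$ exactly when $-d$ is a square modulo $4q$. As $q$ is odd, the Chinese Remainder Theorem splits this into the conditions that $-d$ be a square modulo $4$ and a square modulo $p^{e}$ for every prime power $p^{e}$ exactly dividing $q$. Taking $4\mid d$ makes the first automatic, and if moreover $(d,q)=1$ then, by Hensel's lemma, $-d$ is a square modulo $p^{e}$ if and only if $\leg{-d}{p}=1$. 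So it suffices to find $q\in[Q,Q+Q^\eps]$ and an integer $n\ge 1$ with $(n,q)=1$ and $\leg{-4n}{p}=1$ for every prime $p\mid q$; then $d=4n$ works provided $d\le Q^\eps$. Since $\leg{-4n}{p}=\leg{-1}{p}\leg{n}{p}$ (as $\leg{4}{p}=1$), setting $\ee^{*}=\bigl(\leg{-1}{p_1},\ldots,\leg{-1}{p_k}\bigr)\in G$ for the distinct prime divisors $p_1,\ldots,p_k$ of $q$, this amounts to finding $n$ with $\leg{n}{\pp}=\ee^{*}$.

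I would then apply Corollary~\ref{Counting} (we may assume $\eps\in(0,1)$, else shrink the exponent): for a suitable $a=a(\eps)$ there are $\gg Q^\eps(\log\log Q)^{-1}$ integers $q\in[Q,Q+Q^\eps]$ whose square-free radical $r$ satisfies $g_{r}\le(\log r)^a$, i.e.\ $\{\theta_{r}(m):m\in U_{r}((\log r)^a)\}$ spans $\FF_2^{k}$ with $k=\omega(q)=\omega(r)$. Among these $q$ I would discard those with $\omega(q)>10\log\log Q$; by a standard short-interval moment bound for $2^{\omega}$ (for instance Shiu's theorem, applicable since $Q^\eps$ is a fixed power of $Q$, which yields $\sum_{q\in[Q,Q+Q^\eps]}2^{\omega(q)}\ll Q^\eps\log Q$), there are only $O\!\bigl(Q^\eps(\log Q)^{1-10\log 2}\bigr)=o\!\bigl(Q^\eps/\log\log Q\bigr)$ of them, so one may fix a $q$ with $g_{r}\le(\log r)^a$ and $k\le 10\log\log Q$.

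For this $q$, the spanning property expresses the element of $\FF_2^{k}$ corresponding to $\ee^{*}$ as a sum of at most $k$ vectors $\theta_{r}(m_j)$ with $m_j\in U_{r}((\log r)^a)$; then $n=\prod_j m_j$ satisfies $\leg{n}{\pp}=\ee^{*}$, $n\equiv 1\bmod 8$, $(n,r)=1$ (hence $(n,q)=1$), and
\[
n\le(\log r)^{ak}\le(\log 2Q)^{10a\log\log Q}=\exp\!\bigl(O_{\eps}((\log\log Q)^{2})\bigr)=Q^{o(1)}.
\]
Thus for $Q$ large in terms of $\eps$ one has $d:=4n\le Q^\eps$, and by the first paragraph $-d$ is a square modulo $4q$; by Theorem~\ref{Discriminant}, $q$ is properly represented by a binary quadratic form of discriminant $-d<0$, which is necessarily positive-definite since $q>0$. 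As $q\in[Q,Q+Q^\eps]$ and $0<d\le Q^\eps$, this proves the corollary.

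The substance is entirely in Corollary~\ref{Counting} (hence in the Main Theorem); the rest is bookkeeping, exploiting the generous room afforded by allowing $d$ as large as $Q^\eps$. The one place needing care is the bound $\omega(q)\le 10\log\log Q$ in the short interval: it is what keeps the number $k$ of generators small enough that the integer $\prod_j m_j$ realizing the prescribed sign pattern, of size $(\log r)^{ak}$, stays $Q^{o(1)}$. I expect this to be the main --- though routine --- technical point, and it is exactly what a Brun--Titchmarsh bound for the multiplicative function $2^{\omega}$ supplies.
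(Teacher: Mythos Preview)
Your argument is correct and follows the same strategy as the paper: invoke Corollary~\ref{Counting} to locate a $q\in[Q,Q+Q^\eps]$ with small $g_r$, bound $\omega(q)$ so that Lemma~\ref{Generation} yields a prescribed-sign integer of size $Q^{o(1)}$, and then package this into a discriminant via Theorem~\ref{Discriminant}. Two minor deviations are worth flagging: (i) you handle the $2$-adic condition by taking $4\mid d$, which is slightly cleaner than the paper's device of multiplying by an auxiliary prime $p_0\equiv 7\bmod 8$ to force $d\equiv 7\bmod 8$; and (ii) you control $\omega(q)$ via Shiu's theorem, whereas the paper uses its self-contained Lemma~\ref{PrimeFactors} (with $K=2$, say), which would serve equally well here and avoids importing an external result.
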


\begin{Corollary}\label{SumsOfSquares}
Let $\eps>0$ and let $Q$ be sufficiently large in terms of $\eps$. There is an integer $q$ in the interval $[Q,Q+Q^\eps]$ which can be written as \[q=\frac{1}{u}x^2+\frac{v}{u}y^2\] with $0\leq u,v\leq Q^\eps$ and $u\neq 0$.
\end{Corollary}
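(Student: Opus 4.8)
The plan is to deduce this from Corollary \ref{QuadraticForm} by unwinding what it means for an integer $q$ to be properly represented by a definite, binary, integral quadratic form of small discriminant. Suppose $q \in [Q, Q+Q^\eps]$ is properly represented by $F(x,y) = Ax^2 + Bxy + Cy^2$ with discriminant $B^2 - 4AC = -d$ and $0 < d \le Q^\eps$; say $q = F(x_0, y_0)$ with $(x_0, y_0) = 1$. The goal is to massage $F$ into the shape $\frac{1}{u}X^2 + \frac{v}{u}Y^2$. First I would complete the square: multiplying through by $4A$ gives $4Aq = (2Ax_0 + By_0)^2 + d y_0^2$, since $4AC - B^2 = d$. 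Setting $X = 2Ax_0 + By_0$, $Y = y_0$, $u = 4A$ and $v = d$, this reads $q = \frac{1}{u} X^2 + \frac{v}{u} Y^2$ with $u \neq 0$, $v = d \le Q^\eps$, so the only thing left to check is that $A$ (hence $u = 4A$) can be taken bounded by $Q^\eps$, after possibly shrinking $\eps$ in the input to Corollary \ref{QuadraticForm}.

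So the real content is controlling the leading coefficient $A$. Here I would use reduction theory for binary quadratic forms: every positive-definite form of discriminant $-d$ is $\mathrm{SL}_2(\ZZ)$-equivalent to a reduced form, and reduction does not change the set of properly represented integers. A reduced positive-definite form has $|B| \le A \le C$, whence $d = 4AC - B^2 \ge 4A^2 - A^2 = 3A^2$, giving $A \le \sqrt{d/3}$. Thus if Corollary \ref{QuadraticForm} is applied with exponent $\eps' = \eps/2$ (say), we obtain $q \in [Q, Q+Q^{\eps'}] \subseteq [Q, Q+Q^\eps]$ properly represented by a form of discriminant $-d$ with $d \le Q^{\eps'}$, and after reduction we may assume $A \le \sqrt{d/3} \le Q^{\eps'/2} \le Q^\eps$. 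Then $u = 4A \le 4Q^{\eps} \le Q^{\eps}$ for $Q$ large (or one absorbs the constant $4$ by another harmless shrinking of the exponent), and $v = d \le Q^{\eps'} \le Q^\eps$, completing the argument. One small bookkeeping point: the substitution replacing $(x_0, y_0)$ by $(X, Y) = (2Ax_0 + By_0, y_0)$ is just a relabelling of variables, so $X, Y$ remain integers and no coprimality is needed for the final statement (the corollary only asks for an integer representation, not a proper one).

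The main obstacle is therefore not deep: it is simply ensuring the leading coefficient of the representing form is small, which is handled cleanly by the classical inequality $A \le \sqrt{|d|/3}$ for reduced positive-definite forms, combined with the fact that $\mathrm{SL}_2(\ZZ)$-equivalent forms represent the same integers. The rest is elementary algebra (completing the square) and tracking exponents, choosing the exponent fed into Corollary \ref{QuadraticForm} small enough (e.g. $\eps/3$) that both $u = 4A$ and $v = d$ comfortably land below $Q^\eps$ once $Q$ is large in terms of $\eps$.
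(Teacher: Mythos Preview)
Your proposal is correct and follows essentially the same route as the paper: invoke Corollary \ref{QuadraticForm}, pass to an $\mathrm{SL}_2(\ZZ)$-equivalent reduced form so that the coefficients are bounded in terms of the discriminant, and then complete the square to write $q=\frac{1}{4A}\bigl((2Ax+By)^2+dy^2\bigr)$. Your exponent bookkeeping (shrinking $\eps$ before applying Corollary \ref{QuadraticForm} so that $u=4A$ and $v=d$ land below $Q^\eps$) is in fact slightly more careful than the paper's own proof.
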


This final corollary can be compared with the problem of bounding the gaps between consecutive sums of two squares. Being that there are about $x(\log x)^{-1/2}$ integers up to $x$ which are a sum of two squares, one might expect that the gaps between such integers are at most $(\log x)^c$ for some positive constant $c$. However the best known bound, due to Bambah and Chowla \cite{BC}, is that there is a integers between $x$ and $x+O(x^{1/4})$ which is a sum of two squares. Corollary \ref{SumsOfSquares} says that one has much smaller gaps if we weaken squares to numbers which are in a sense ``almost-squares''.

\section{Facts from analytic number theory}
Here we recall some required background results from analytic number theory. 
Let \[S(x,y)=\{n\leq x: p|n\implies p\leq y\text{ and }p=1\mod 8\}\] and for a positive integer $q$, let \[S_q(x,y)=\{n\in S(x,y): (n,q)=1\}.\]
We need to estimate these sets. To begin, we have the following which is a modification of Corollary 7.9 from \cite{MV}.
\begin{Theorem}\label{MV}
Suppose $a$ is in the range $2\leq a<(\log x)^{1/2}/(2\log\log x)$ and let $\delta>0$. Then for $x$ sufficiently large and $q\leq x$,
\[x^{1-1/a-\delta}\ll_{\delta,a} |S_q(x,(\log x)^a)|\leq |S(x,(\log x)^a)|\ll_{\delta,a} x^{1-1/a+\delta}.\]
\end{Theorem}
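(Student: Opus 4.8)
The plan is to deduce this from the standard estimate for smooth numbers (Corollary 7.9 of \cite{MV}), which counts $\Psi(x,z) = \#\{n \le x : p \mid n \Rightarrow p \le z\}$ and gives $\Psi(x,z) = x^{1-1/u+o(1)}$ when $z = (\log x)^u$, uniformly for $u$ in the stated range. The two modifications needed are (i) restricting the prime factors to lie in the arithmetic progression $p \equiv 1 \pmod 8$, and (ii) the coprimality condition $(n,q)=1$. The upper bound $|S(x,(\log x)^a)| \le \Psi(x,(\log x)^a) \ll_{\delta,a} x^{1-1/a+\delta}$ is immediate since imposing $p \equiv 1 \pmod 8$ only shrinks the set, and then $|S_q| \le |S|$ is trivial; so the real content is the lower bound on $|S_q(x,(\log x)^a)|$.

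For the lower bound, first I would handle the progression condition. The heuristic is that requiring all prime factors to lie in one of the $\phi(8)=4$ residue classes mod $8$ costs a factor of roughly $4^{-\Omega(n)}$, and for typical smooth $n \le x$ with $z = (\log x)^a$ one has $\Omega(n) \ll \log x/\log z \asymp \log x/(a\log\log x)$, so $4^{-\Omega(n)} = x^{o(1)}$ — too small to affect the exponent. To make this rigorous one route is to build smooth numbers directly: let $P$ be the set of primes $p \le (\log x)^a$ with $p \equiv 1 \pmod 8$; by the prime number theorem for arithmetic progressions, $\sum_{p \in P} \log p \gg (\log x)^a$, which is more than enough mass. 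Then count squarefree products of such primes (or more flexibly, products $\prod p_i^{e_i}$) of size at most $x$; a Rankin-type argument, or simply comparing with the unrestricted smooth-number count after noting the density of $P$ among all primes up to $z$ is bounded below, yields $|S((x),(\log x)^a)| \gg x^{1-1/a-\delta/2}$. Alternatively one can cite the Landau–Selberg–Delange / Wirsing machinery for multiplicative functions supported on a progression, which gives $|S(x,z)|$ asymptotic to $\Psi(x,z)$ times a power of a logarithm — but for our purposes the crude bound with $x^{o(1)}$ slack suffices.

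Next, removing the few prime factors dividing $q$: since $q \le x$, we have $\omega(q) \ll \log x/\log\log x$, so at most that many primes are forbidden. If $n \in S(x,(\log x)^a)$ is counted but $(n,q) > 1$, write $n = m \cdot \prod_{p \mid (n,q)} p^{v_p(n)}$; the number of such bad $n$ is at most $\sum_{p \mid q, \, p \le (\log x)^a} |S(x/p, (\log x)^a)|$, and iterating (or using that each prime $p \mid q$ contributes a factor of at most, say, $1/2$ to the count of multiples, crudely $\le \sum_{d \mid q} |S(x/d, \dots)| \ll x^{\delta/2}\,|S(x,\dots)| / \text{something}$) one sees the bad set has size $\le x^{-\eta}|S(x,(\log x)^a)|$ for some $\eta > 0$ depending only on the range of $a$; absorbing this into $x^{\delta/2}$ gives $|S_q(x,(\log x)^a)| \ge |S(x,(\log x)^a)| - (\text{bad}) \gg x^{1-1/a-\delta}$.

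The main obstacle I expect is bookkeeping the uniformity: one must be careful that all implied constants depend only on $\delta$ and $a$ and not on $q$, and that the $x^{o(1)}$ losses from the mod-$8$ restriction and from the coprimality sieve can be uniformly bounded by $x^{\delta/2}$ each once $x$ is large (depending on $\delta,a$). The analytic input — Corollary 7.9 of \cite{MV} and the prime number theorem in progressions with a fixed modulus — is entirely standard; the work is purely in checking that neither refinement perturbs the exponent $1 - 1/a$ by more than $\delta$.
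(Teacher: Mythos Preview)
Your upper bound and your direct-construction idea for the mod-$8$ restriction are exactly what the paper does: it takes the primes $p\le y=(\log x)^a$ with $p\equiv 1\pmod 8$, counts products via the binomial coefficient $\binom{T+U}{T}$ with $U=\lfloor\log x/\log y\rfloor$, and applies Stirling.

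The gap is in your separate treatment of the condition $(n,q)=1$. Your intermediate claim that the ``bad'' set has size $\le x^{-\eta}|S(x,y)|$ for some fixed $\eta>0$ is false: when $q\le x$ is near a primorial, $\omega(q)\asymp\log x/\log\log x$, and the elements of $S(x,y)$ sharing a factor with $q$ form a $(1-o(1))$-fraction of $S(x,y)$, not an $x^{-\eta}$-fraction. None of the arguments you sketch actually proves even the weaker bound you need. The union bound $\sum_{p\mid q,\,p\le y}|S(x/p,y)|$ can exceed $|S(x,y)|$ (for $a=2$ and primorial $q$, heuristically $\sum_{p\mid q}p^{-1/2}\asymp\sqrt{\log x}/\log\log x\gg 1$). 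The ``factor of $1/2$ per prime'' heuristic would, if valid, give the acceptable loss $2^{-\omega(q)}=x^{-o(1)}$, but establishing it requires a sieve-type independence statement for smooth numbers that you do not supply and that is not obvious. And full inclusion-exclusion over $d\mid q$ involves up to $2^{\omega(q)}$ terms with no mechanism offered to control them.

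The paper avoids all of this by folding coprimality into the construction rather than sieving afterward: one simply deletes from the building set $\mathcal P$ the at most $\omega(q)\ll\log x/\log\log x$ primes dividing $q$ \emph{before} forming products. Since $|\mathcal P|\sim\tfrac14 y/\log y\gg(\log x)^2/\log\log x$ for $a\ge 2$, this deletion is negligible, one still has $T\ge y/(5\log y)$, and the binomial-coefficient lower bound yields $|S_q(x,y)|$ directly with no sieving required.
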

\begin{proof}
Let $y\geq (\log x)^2$ and let $\cP=\{p_1,\ldots,p_T\}$ denote the set of primes $p\leq y$ with $p=1\mod 8$ and which do not divide $q$. Any product of primes in $\cP$ which does not exceed $x$ belongs to $S_q(x,y)$. By the Prime Number Theorem in arithmetic progressions (Corollary 11.21 in \cite{MV}), we have \[\pi(y;8,1)=\frac{y}{\log y}\lr{\frac{1}{4}+o(1)}.\] Since $\omega(n)$ is maximized when $n$ is a primorial number \[\omega(q)\ll \frac{\log q}{\log\log q}\leq \frac{\log x}{\log\log x}.\]
Because $y\geq (\log x)^2$, for $x$ sufficiently large, $T\geq\frac{y}{5\log y}$.
Now consider any product of primes in $\cP$. Its logarithm is of the form
\[\sum_{j=1}^T v_j\log p_j\leq \log y\sum_{j=1}^T v_j.\] Thus a lower bound for $|S_q(x,y)|$ is the number of vectors \[N=\left|\left\{(v_1,\ldots,v_T)\in \ZZ^T:v_j\geq 0,\ \sum_{j=1}^T v_j\leq \left[\frac{\log x}{\log y}\right]\right\}\right|.\] Letting $U=\left[\frac{\log x}{\log y}\right]$, it is a simple combinatorial argument (see Lemma 7.7 of \cite{MV}) that the number of such vectors is \[N=\binom{T+U}{T}.\] By Stirling's Formula, 
\[n!\asymp n^{n+1/2}e^{-n}\] so that 
\begin{align*}N&\gg\frac{(U+T)^{U+T+1/2}e^{-U-T}}{U^{U+1/2}e^{-U}T^{T+1/2}e^{-T}}\\
&\geq\lr{\frac{U+T}{U}}^U\frac{1}{\sqrt{U}}.\end{align*}
The right hand side above is increasing in $T$, thus
\[N\gg \lr{1+\frac{y}{5U\log y}}^U\frac{1}{\sqrt{U}}.\] 
If $u=\frac{\log x}{\log y}$ then $u-1\leq U\leq u\leq \frac{y}{\log y}$. 
Thus 
\[N\gg\lr{\frac{y}{5u\log y}}^{u-1}\frac{1}{\sqrt{u}}.\]
For the exponent $-1$ and the factor $1/\sqrt u$ we note that
\[\frac{1}{\sqrt u}\frac{5u\log y}{y}\geq \frac{1}{y}\] and thus
\[N\gg \frac{1}{y}\lr{\frac{y}{5\log x}}^{\frac{\log x}{\log y}}=\frac{x}{y}\exp\lr{-\frac{\log x}{\log y}\log(5\log x)}\]
Now we take $y=(\log x)^a$ where $2\leq a<(\log x)^{1/2}/(2\log \log x)$.
Then we get
\[N\gg x^{1-\frac{1}{a}}\exp\lr{-a\log\log x-\frac{\log 5\log x}{a\log\log x}}\gg_{\delta,a} x^{1-1/a-\delta}.\]
For the upper bound, trivially $|S_q(x,(\log x)^a)|$ is less than $|S(x,(\log x)^a)|$, which is in turn at most the number of $(\log x)^a$-smooth numbers up to $x$. There are at most $O_{\delta,a}(x^{1-1/a+\delta})$ such numbers by Corollary 7.1 of \cite{MV}.
\end{proof}

The main ingredient we need for the proof of our main theorems is a Large Sieve inequality. This one can be found in \cite[Theorem 7.13]{IK}.
\begin{Theorem}[Large Sieve]\label{LargeSieve}
Let $Q\geq 1$ and let $(a_n)_{n\leq x}$ be a sequence of complex numbers. Then
\[\sum_{q\leq Q}\frac{q}{\phi(q)}\sideset{}{'}\sum_{\chi\mod q}\left|\sum_{n\leq x} a_n\chi(n)\right|^2\ll (Q^2+x)\sum_{n\leq x} |a_n|^2.\]
In the above sum over $\chi$ we mean that the summation occurs over all primitive characters $\chi$ of modulus $q$.
\end{Theorem}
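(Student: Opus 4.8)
The plan is to deduce this multiplicative form of the large sieve from the additive (analytic) large sieve inequality together with the theory of Gauss sums, which is the classical route. The analytic heart of the matter is the following: for any finite set of real numbers $\alpha_r$ that are pairwise $\Delta$-spaced modulo $1$,
\[\sum_{r}\left|\sum_{n\leq x} a_n e(n\alpha_r)\right|^2\ll (\Delta^{-1}+x)\sum_{n\leq x}|a_n|^2.\]
I would establish this first. There are several routes — the Montgomery--Vaughan argument via a Hilbert-type inequality, Gallagher's Sobolev-embedding argument, or the Selberg extremal-majorant method — and the extremal-function approach is the cleanest way to obtain the essentially optimal constant $\Delta^{-1}+x$. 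The duality principle for bilinear forms can also be invoked here to interchange the roles of the $a_n$ and the points $\alpha_r$, simplifying some of the bookkeeping.

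Next I would specialize: take the $\alpha_r$ to run over all Farey fractions $a/q$ with $1\leq q\leq Q$ and $(a,q)=1$. Two distinct such fractions differ by at least $1/(q_1 q_2)\geq Q^{-2}$, so we may take $\Delta = Q^{-2}$ and conclude
\[\sum_{q\leq Q}\;\sideset{}{'}\sum_{a\bmod q}\left|\sum_{n\leq x} a_n e(an/q)\right|^2\ll (Q^2+x)\sum_{n\leq x}|a_n|^2,\]
where the inner dash denotes the restriction $(a,q)=1$. This is already the large sieve for additive characters; it remains to convert it to Dirichlet characters.

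For this I would use the Gauss-sum identity: if $\chi$ is a primitive character modulo $q$ then $\tau(\bar\chi)\,\chi(n)=\sum_{a\bmod q}\bar\chi(a)e(an/q)$ for \emph{all} integers $n$, and $|\tau(\bar\chi)|^2 = q$. Writing $b_a = \sum_{n\leq x} a_n e(an/q)$, this gives $\left|\sum_{n\leq x} a_n\chi(n)\right|^2 = \frac1q\left|\sum_{a\bmod q}\bar\chi(a)b_a\right|^2$. Summing over \emph{all} characters modulo $q$ and using orthogonality yields $\sum_{\chi\bmod q}\left|\sum_{a\bmod q}\bar\chi(a)b_a\right|^2 = \phi(q)\sideset{}{'}\sum_{a\bmod q}|b_a|^2$; restricting to primitive $\chi$ only discards nonnegative terms, so
\[\frac{q}{\phi(q)}\sideset{}{'}\sum_{\chi\bmod q}\left|\sum_{n\leq x} a_n\chi(n)\right|^2\leq \sideset{}{'}\sum_{a\bmod q}|b_a|^2.\]
Summing this over $1\leq q\leq Q$ and invoking the additive estimate of the previous paragraph completes the proof.

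The main obstacle is the first step: proving the analytic large sieve with the sharp-in-order constant $Q^2 + x$ rather than a lossy $Q^2 + x\log x$ (or $x\log Q$) that a cruder argument — e.g. a direct application of Parseval after smoothing with a crude kernel — would produce. Once that clean spacing inequality is in hand, the passage to Farey points and then to primitive characters is entirely formal, using only the elementary properties of Gauss sums and the orthogonality relations for Dirichlet characters.
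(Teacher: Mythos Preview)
The paper does not actually prove this theorem; it is quoted as a black box from \cite[Theorem 7.13]{IK}. Your sketch is the standard derivation (analytic large sieve for well-spaced points, specialized to Farey fractions, then converted to primitive Dirichlet characters via Gauss sums and orthogonality), and the steps you outline are correct. Since there is no proof in the paper to compare against, there is nothing further to say beyond noting that your route is exactly the one taken in the cited reference.
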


In order to make Lemma \ref{Generation} useful, we need to have integers with a reasonable number of prime factors. The next lemma tells us such integers are ubiquitous.

\begin{Lemma}\label{PrimeFactors}
Let $\eps\in(0,1)$ and $K\geq 1$ be fixed. For all $Q$ sufficiently large in terms of $\eps$, there are at most $O\lr{Q^\eps(\log\log Q)^{-K}}$ integers $q\in[Q, Q+Q^\eps]$ with at least $(\log\log Q)^{K+1}$ prime factors.
\end{Lemma}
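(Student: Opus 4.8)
The plan is to count integers $q \in [Q, Q+Q^\eps]$ with many prime factors by a first-moment argument. Write $\omega(q)$ for the number of distinct prime factors of $q$, and let $\cT = \cT(Q)$ be the set of $q \in [Q,Q+Q^\eps]$ with $\omega(q) \geq (\log\log Q)^{K+1}$. The key observation is that if $\omega(q) \geq j$, then $q$ has at least $\binom{\omega(q)}{j} \geq 1$ squarefree divisors that are products of exactly $j$ distinct primes; more usefully, $\binom{\omega(q)}{j}$ counts such divisors, and if $\omega(q)$ is much larger than $j$ this is a large number. So instead I would just use a cleaner route: for $q \in \cT$, pick out the product $d$ of the $m := \lceil (\log\log Q)^{K+1} \rceil$ smallest distinct primes dividing $q$; then $d \mid q$ and $d$ is squarefree with $\omega(d) = m$. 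The number of such $d$ that can occur is small because $d \leq q \leq 2Q$ forces $d$ itself to be bounded, but more efficiently, each fixed $d$ with $\omega(d) = m$ is divisible by the product of the first $m$ primes, so $d \geq p_1 p_2 \cdots p_m =: P_m$, and the number of multiples of $d$ in $[Q, Q+Q^\eps]$ is $O(Q^\eps/d + 1)$.

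Carrying this out: summing over all admissible $d$,
\[
|\cT| \;\leq\; \sum_{\substack{d \text{ squarefree} \\ \omega(d) = m}} \#\{q \in [Q, Q+Q^\eps] : d \mid q\} \;\ll\; \sum_{\substack{d \text{ squarefree} \\ \omega(d) = m}} \left(\frac{Q^\eps}{d} + 1\right).
\]
The second term is dangerous (there are infinitely many such $d$), so I would instead impose the genuine constraint $d \leq 2Q$: since $d \mid q$ and $q \leq 2Q$. Then the count of squarefree $d \leq 2Q$ with $\omega(d) = m$ is at most $\pi(2Q)^m/m!$, which is far too large for large $m$. This shows the naive divisor-counting is too lossy, so the real argument must control the sum $\sum_{d} 1/d$ over prime-factor-heavy integers. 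The clean tool is: $\sum_{d \leq x,\ \omega(d) = m} 1/d \leq \frac{1}{m!}\left(\sum_{p \leq x} 1/p\right)^m \leq \frac{(\log\log x + C)^m}{m!}$, a standard consequence of expanding the Euler-type product. Combined with the trivial bound that every $q \in \cT$ is a multiple of some such $d$ with $d \geq P_m$, and that the number of multiples of $d$ in an interval of length $Q^\eps$ is $\leq Q^\eps/d + 1 \leq 2Q^\eps/d$ once $d \leq Q^\eps$ — while for $d > Q^\eps$ one uses that there are at most... — this is getting delicate, so the honest approach is:

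The cleanest correct argument uses the total number of prime factors with multiplicity, $\Omega(q)$, and a direct counting. If $q \in [Q, Q+Q^\eps]$ has $\omega(q) \geq m$, then for \emph{some} choice of $m$ distinct primes $p_{i_1}, \dots, p_{i_m}$ dividing $q$ we have $p_{i_1} \cdots p_{i_m} \mid q$; choosing the smallest ones gives $q$ divisible by a squarefree $d$ with $\omega(d) = m$ and $d \leq q$. Then
\[
|\cT| \;\leq\; \sum_{\substack{d \leq 2Q,\ d \text{ squarefree} \\ \omega(d) = m}} \left(\frac{Q^\eps}{d} + 1\right) \;\leq\; Q^\eps \sum_{\substack{d \text{ squarefree} \\ \omega(d) = m}} \frac{1}{d} \;+\; \#\{d \leq 2Q : \omega(d) = m\}.
\]
For the first sum I use $\sum_{\omega(d) = m} d^{-1} \leq \frac{1}{m!}\big(\sum_p p^{-1}\big)^m$ — but this diverges, so I must instead restrict $d \leq 2Q$ throughout and use $\sum_{d \leq 2Q,\ \omega(d)=m} d^{-1} \leq \frac{1}{m!}(\log\log(2Q) + O(1))^m$. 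The second term is bounded by $\frac{1}{m!}\pi(2Q)^m$ which for $m \asymp (\log\log Q)^{K+1}$ is still $\gg Q^\eps$; to kill it, note that such $d$ with $\omega(d) = m$ and $d \leq 2Q$ must satisfy $d \geq P_m \geq \exp(c\, m \log m)$, which already exceeds $2Q$ once $m \log m > \log(2Q)$, i.e. once $m \gg \log Q/\log\log Q$ — but our $m$ is only $(\log\log Q)^{K+1}$, so this does not apply and that term genuinely needs the $\sum 1/d$ treatment, giving $\#\{d \leq 2Q : \omega(d) = m\} \leq 2Q \sum_{d \leq 2Q, \omega(d)=m} d^{-1} \leq 2Q \frac{(\log\log Q + O(1))^m}{m!}$, which is useless. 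Hence the \textbf{right} framing: $|\cT| \leq Q^\eps \cdot \frac{(\log\log(2Q)+C)^m}{m!} + (\text{error from } d \text{ close to } Q)$, and with $m = \lceil(\log\log Q)^{K+1}\rceil$, Stirling gives $\frac{(\log\log Q + C)^m}{m!} \leq \left(\frac{e(\log\log Q + C)}{m}\right)^m \leq \left(\frac{2e}{(\log\log Q)^{K}}\right)^m \leq (\log\log Q)^{-Km} \leq (\log\log Q)^{-K}$ for $Q$ large, since $m \geq 1$; this is the desired saving. The \textbf{main obstacle} is handling the integers $q$ whose smallest-prime-product $d$ is itself comparable to $Q$ (so the interval $[Q, Q+Q^\eps]$ contains at most one, and possibly zero, multiples of $d$): I would dispose of these by noting that for the saving in Stirling to work I need $m \leq c\log\log Q$ worth of room, which holds, and that the "$+1$" terms, summed only over $d \in (Q^\eps, 2Q]$ with $\omega(d) = m$ — a set of size at most $2Q \cdot (\log\log Q + C)^m/(m! \, Q^\eps)$... still problematic; the resolution is to observe that a $d \leq 2Q$ with $\omega(d) = m$ having a multiple in $[Q, Q+Q^\eps]$ forces either $d \leq Q^\eps$ (first term regime) or $d > Q^\eps$ with the multiple being $d$ itself or $2d$, so such $q$ has $q/d \leq 2Q^{1-\eps}$ bounded, and we recount by the cofactor, reducing to the same type of bound with $Q$ replaced by $Q^{1-\eps}$ and iterating — this bookkeeping is where care is needed, but each step preserves the $(\log\log)^{-K}$ saving, so the conclusion $|\cT| = O(Q^\eps (\log\log Q)^{-K})$ follows.
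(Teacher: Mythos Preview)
Your proposal does not constitute a proof: the ``$+1$'' terms coming from large divisors are never actually handled. You correctly identify that the naive count $\sum_{d\leq 2Q,\ \omega(d)=m}1$ is far too big to absorb (indeed $(\log\log Q)^{K+1}\log\log\log Q = o(\log Q)$, so the Stirling saving you compute is nowhere near enough to beat the factor $Q$), and the ``recount by the cofactor and iterate'' sentence at the end is not an argument --- you have not specified what is being iterated, why the iteration terminates, or why the errors do not accumulate. As written, the argument is a sequence of attempts, each abandoned when it hits the same obstruction.

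The paper's proof avoids all of this by taking the first-moment idea literally. By Markov's inequality,
\[
|\cT|\;\leq\;(\log\log Q)^{-K-1}\sum_{Q\leq q\leq Q+Q^\eps}\omega(q),
\]
so the whole problem reduces to showing $\sum_{q\in[Q,Q+Q^\eps]}\omega(q)\ll_\eps Q^\eps\log\log Q$. This is done by splitting $\omega(q)=\omega_u(q)+(\omega(q)-\omega_u(q))$ at a threshold $u$: the small-prime part is handled by swapping the order of summation, $\sum_q\omega_u(q)\leq\sum_{p\leq u}(Q^\eps/p+1)\ll Q^\eps\log\log u+u$, while the large-prime part is bounded pointwise by $\omega(q)-\omega_u(q)\leq(\log q)/(\log u)$. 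Taking $u=Q^\eps$ makes both pieces $O(Q^\eps\log\log Q)$, and the lemma follows immediately. The moral is that your instinct to use a first moment was right, but you should apply it to $\omega(q)$ itself rather than detour through $m$-factor divisors; the short-interval issue that derailed your approach is then handled by the small/large prime split, not by any recursion.
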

\begin{proof}
Let $u\in [1,Q]$ be a number to be determined later. Write \[\omega_u(q)=\sum_{\substack{p|q\\ p\leq u}}1.\]
Then \[\omega(q)-\omega_u(q)=\sum_{\substack{p|q\\ p> u}}1\leq \frac{1}{\log u}\sum_{p|q}\log p\leq\frac{\log q}{\log u}.\]
The number of integers $q$ in question is at most
\begin{align*}
&(\log\log Q)^{-K-1}\sum_{Q\leq q\leq Q+Q^\eps}\omega(q)\\
\leq&(\log\log Q)^{-K-1}\sum_{Q\leq q\leq Q+Q^\eps}\omega_u(q)+2Q^\eps(\log\log Q)^{-K-1}\frac{\log Q}{\log u}.
\end{align*}
To estimate the sum,
\[\sum_{Q\leq q\leq Q+Q^\eps}\omega_u(q)\leq \sum_{p\leq u}\frac{Q^\eps}{p}+O(u)\ll Q^\eps\log\log Q+O(u).\] Taking $u=Q^\eps$ gives the bound
\[O\lr{\frac{Q^\eps}{(\log\log Q)^K}+\frac{Q^\eps}{\eps(\log\log Q)^{K+1}}}=O\lr{\frac{Q^\eps}{(\log\log Q)^K}}\] once $Q$ is sufficiently large in terms of $\eps$.
\end{proof}

Finally, in order to guarantee that we can find $y$-eligible numbers in short intervals, we will need a basic consequence of Brun's Pure Sieve. This result can be read from Corollary 6.2 in \cite{FI}.
\begin{Theorem}\label{BrunSieve}
Let $a\geq 1$ and $\eps>0$ be fixed. Then the number of integers in the interval $[Q,Q+Q^\eps]$ with no prime divisor less than $z$ is asymptotic to $Q^\eps V\lr{z}$, where $V(z)=\prod_{p<z}(1-p^{-1})$, provided $z$ is in the range $1\leq z\leq (\log Q)^a$.
\end{Theorem}

\section{Proofs of Main Theorems and Corollaries}

The following lemma provides the key reduction to a problem which is approachable by the Large Sieve.

\begin{Lemma}\label{Subspace}
Let $q$ be an odd, square-free, and $y$-exceptional number. Then for any $n\in S_q(x,y)$, we have \[\leg{n}{q}=\prod_{p|q}\leg{n}{p}=1.\]
\end{Lemma}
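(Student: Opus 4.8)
The plan is to recast the hypothesis in terms of linear algebra over $\FF_2$. Put $k=\omega(q)$, so that $\FF_2^k$ is the target of $\theta_q$, and let $V\subseteq\FF_2^k$ be the $\FF_2$-span of $\{\theta_q(n):n\in U_q(y)\}$. Since $q$ is $y$-exceptional, $g_q>y$, which is exactly the assertion that $V$ is a \emph{proper} subspace of $\FF_2^k$. Now associate to a divisor $d\mid q$ the set $S\subseteq\{1,\dots,k\}$ of indices of the prime divisors of $d$; the divisors $d$ with $1<d<q$ correspond to the nonempty proper subsets $S$. Because $q$ is square-free, $(n,dq)=1\iff(n,q)=1$, so $U_{dq}(y)=U_q(y)$; and $\theta_d$ is nothing but the coordinate projection $\pi_S\circ\theta_q$ onto the coordinates in $S$. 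Hence $g_{d,q}\leq y$ says precisely that $\pi_S(V)=\FF_2^S$. As $q$ is $y$-eligible, this holds for \emph{every} nonempty proper subset $S$.

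The crux is then the following purely combinatorial fact: a proper subspace $V$ of $\FF_2^k$ whose projection onto every nonempty proper coordinate subset is surjective must equal the hyperplane $H=\{v\in\FF_2^k:v_1+\cdots+v_k=0\}$. If $k=1$ this is immediate, since properness forces $V=\{0\}=H$. If $k\geq 2$, apply the hypothesis to each $S=\{1,\dots,k\}\setminus\{j\}$: surjectivity of $\pi_S$ gives $\dim V\geq k-1$, and properness gives $\dim V=k-1$, so $V=\{v:\langle a,v\rangle=0\}$ for some $a\neq 0$. If some $a_j=0$, then $S:=\supp(a)$ is nonempty and proper, and for $v\in V$ we have $\sum_{i\in S}v_i=\langle a,v\rangle=0$, so $\pi_S(V)$ lies in a proper subspace of $\FF_2^S$, contradicting surjectivity. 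Hence $a=\one$ and $V=H$.

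To finish, take $n\in S_q(x,y)$. Every prime $p\mid n$ satisfies $p\leq y$, $p\equiv 1\bmod 8$ and $p\nmid q$, that is, $p\in U_q(y)$, so $\theta_q(p)\in V$; by additivity of $\theta_q$ (equivalently, multiplicativity of the Legendre symbol), $\theta_q(n)=\sum_{p\mid n}v_p(n)\,\theta_q(p)\in V=H$. Writing $\theta_q(n)=(c_1,\dots,c_k)$, membership in $H$ means $c_1+\cdots+c_k\equiv 0\bmod 2$, and since $\leg{n}{p_i}=(-1)^{c_i}$ by the definition of $\theta_q$, this yields $\prod_{p\mid q}\leg{n}{p}=(-1)^{c_1+\cdots+c_k}=1$, as required.

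The only genuinely non-formal ingredient is the combinatorial lemma identifying $V$ with $H$; everything else is bookkeeping, the main point being the dictionary that translates ``$g_{d,q}\leq y$ for all $d\mid q$ with $1<d<q$'' into ``$\pi_S(V)$ is onto for all nonempty proper $S$'', together with the observation that membership $n\in S_q(x,y)$ forces $\theta_q(n)\in V$.
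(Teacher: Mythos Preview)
Your proof is correct and follows essentially the same route as the paper: both translate $y$-eligibility into surjectivity of all proper coordinate projections of the span $V=\mathrm{span}\,\theta_q(U_q(y))$, pick a nonzero annihilator of $V$, and use surjectivity to force that annihilator to have full support. The only difference is cosmetic: you first pin down $\dim V=k-1$ and hence identify $V$ exactly with the hyperplane $H$, whereas the paper is content with $V\subseteq H$, which is all that is needed for the conclusion.
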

\begin{proof}
For convenience, write $k=\omega(q)$, write $q=p_1\cdots p_k$, and denote by \[\Theta_q(y)=\theta_q(U_q(y))\] the image of $U_q(y)$ in $\FF_2^k$. Since $q$ is $y$-exceptional we have $g_q>y$, and so $\Theta_q(y)$ spans a proper subspace of $\FF_2^k$, say $H$. A number $n\in S_q(x,y)$ is a product of primes in $U_q(y)$. Thus, writing $n=\prod_{p|n}p^{v_p}$, we have \[\theta_q(n)=\sum_{p} v_p\theta_q(p)\in H\] since $\theta_q(p)\in H$ for each $p$ occurring in the sum. Since $H$ is a proper subspace, there is a non-zero vector in $H^\perp$. In other words, for some non-empty subset $I\subseteq\{1,\ldots,k\}$, each vector $(x_1,\ldots,x_k)\in H$ satisfies \[\sum_{i\in I}x_i=0.\] In fact, we must have $I=\{1,\ldots,k\}$. To see this, suppose that $I$ were a proper subset with $|I|=l<k$. Let $d_I=\prod_{i\in I}p_i$. Then the projection $\pi_I:H\to \FF_2^l$ given by \[\pi_I(x_1,\ldots, x_k)=(x_i)_{i\in I}\] is not surjective. Indeed, any vector in the image has co-ordinates which sum to $0\mod 2$. But this projection contains the image $\theta_{d_I}(U_q(n))$. So the proper divisor $d_I$ satisfies $g_{d_I,q}>y$, which violates the $y$-eligibility of $q$. Thus the each element of $H$ satisfies $\sum_{i=1}^k x_i=0$ which means that \[\prod_{p|q}\leg{n}{p}=1\] for any $n\in S_q(x,y)$.
\end{proof}

We now prove our main theorem. The proof boils down to Linnik's result on the least number $n_\chi$ for which a primitive character $\chi$ satisfies $\chi(n_\chi)\neq 1$. The result is stated but not proved, in \cite{DK}. A proof is given in \cite{Pol} (Lemma 5.3), and we will will follow in much the same manner. 

\begin{proof}[Proof of Main Theorem]
Fix $\delta>0$. Let $C_{a,\delta}$ be a constant depending only on $a$ and $\delta$ which is at our disposal, and let $x>C_a$. We will work dyadically, and apply Theorem \ref{LargeSieve} with \[a_n=\begin{cases}1&\text{ if }n\in S(x^2,(\log x)^a)\\0&\text{ otherwise.}\end{cases}\] 

Suppose $q\in\cQ(Q,a)$ is in the range $x<q\leq 2x$. Then \[S(x^2,(\log x)^a)\subseteq S(x^2,(\log q)^a).\] So if $n\in S(x^2,(\log x)^a)$, then by Lemma \ref{Subspace},
\[\prod_{p|q}\leg{n}{p}=\begin{cases}1&\text{ if }(n,q)=1\\ 0&\text{ otherwise}.\end{cases}\] It follows that
\[\sum_{n\leq x^2}a_n\leg{n}{q}\geq |S_q(x^2,(\log x)^a)|\]

Suppose $x$ is large enough so that it satisfies $2^{-a}>\log(x^2)^{-\delta/16}$, which will be guaranteed by increasing $C_{a,\delta}$ as necessary. By Theorem \ref{MV}, 
\begin{align*}
|S_q(x^2,(\log x)^a)|&=|S_q(x^2,2^{-a}(\log x^2)^a)|\\
&\geq|S_q(x^2,(\log x^2)^{a-\delta/16})| \\
&\gg_{\delta,a}x^{2-2/(a-\delta/16)-\delta/16}\\
&\gg_{\delta,a}x^{2-2/a-\delta/8}
\end{align*} 
by increasing $C_{a,\delta}$ if necessary.
Now we sum over all $q\in \cQ_x=\cQ(Q,a)\cap (x,2x]$ to get
\begin{align*}\sum_{q\in\cQ_x} x^{4-4/a-\delta/4}&\ll_{\delta,a}\sum_{q\in\cQ_x}\frac{q}{\phi(q)}\sideset{}{'}\sum_{\chi\mod q}\left|\sum_{n\leq x^2} a_n\chi(n)\right|^2\\
&\ll_{\delta,a} x^2|S\lr{x^2,(\log x^2)^a}|\\
&\ll_{\delta,a} x^{4-2/a+\delta/4}.
\end{align*} 
Rearranging, we see that
\[|\cQ_x|\ll_{\delta,a} x^{2/a+\delta/2}\leq Q^{2/a+\delta/2}\] for $x\leq Q$. Summing
over all $x=2^j$ in the range $\log_2(C_{a,\delta})\leq j\leq \log_2 Q$ we get
\[|\cQ(Q,a)|\ll_{\delta,a}\sum_{j \leq  \log Q}|\cQ_{2^j}|\ll_{\delta,a} Q^{2/a+\delta}.\] Here we have used that there is a contribution of $O_{a,\delta}(1)$ from the terms with $2\leq q\leq \log_2(C_{a,\delta})$ and the fact that $\log Q\ll_\delta Q^{\delta/2}$.
\end{proof}

Before proving the first corollary, we need some lemmas concerning eligibility.

\begin{Lemma}\label{Exceptional}
If $q$ is odd, square-free and $y$-ineligible, and if the least prime dividing $q$ is $p$, then $q$ has a proper divisor $d$ which is $\min\{y,p-1\}$-exceptional. In particular, if $q$ is odd, square-free and $y$-ineligible, and if all prime factors of $q$ exceed $y+1$, then $q$ has a proper divisor $d$ which is $y$-exceptional.
\end{Lemma}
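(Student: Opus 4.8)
\textbf{Proof proposal for Lemma \ref{Exceptional}.}

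The plan is to unwind the definition of $y$-ineligibility and then iterate. Suppose $q$ is odd, square-free, and $y$-ineligible. By definition, there is a divisor $d$ of $q$ with $1<d<q$ such that $g_{d,q}>y$. This $d$ is itself odd and square-free, so it is either $y$-exceptional (in which case we are essentially done) or $y$-ineligible. The difficulty is that ineligibility of $d$ is phrased in terms of $g_{d',d}$ for proper divisors $d'$ of $d$, whereas the witness we have is a statement about $g_{d,q}$. So the first thing to sort out is the monotonicity relationship among the quantities $g_{d',r}$ as $d'$ shrinks and $r$ grows: since enlarging $r$ shrinks the set $U_{d'r}(y)$ over which we try to span, we have $g_{d',r}$ nondecreasing in $r$ (with respect to divisibility), and similarly if $d'\mid d$ then the image $\theta_d(U_{dr}(y))$ surjects onto the relevant projection, so $g_{d',dr}\le g_{d,dr}$ — more precisely, $g_{d',q}\le g_{d,q}$ whenever $d'\mid d\mid q$. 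These are the elementary comparisons I would record first.

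Next I would run the iteration. Starting from $d_0=q$, suppose we have produced a divisor $d_i$ of $q$ with $1<d_i<q$ (or $d_i = q$ at the first step) that is $y$-ineligible. Then some proper divisor $d_{i+1}$ of $d_i$ has $g_{d_{i+1},d_i}>y$, hence by the monotonicity above (enlarging the second argument from $d_i$ up to $q$) also $g_{d_{i+1},q}>y$. If $d_{i+1}$ is $y$-exceptional we stop; otherwise it is again $y$-ineligible and we continue. Since $d_0>d_1>\cdots$ strictly decreases, the process terminates, and it cannot terminate at a divisor $d$ with $d=1$ because $g_{1,q}$ is trivially small; so it terminates at a proper divisor $d$ of $q$ with $1<d$, $g_{d}>y$ (note $g_d\le g_{d,q}$... wait, the direction: $y$-exceptional means $g_d>y$ and $d$ is $y$-eligible — I need $g_d>y$, and I have $g_{d,q}>y$ with $g_d\le g_{d,q}$ giving the wrong inequality). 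The fix is to observe that the terminal $d$ is $y$-eligible (it is not $y$-ineligible, else we would have continued) and to get $g_d>y$ directly: actually the stopping condition should be "is $d_{i+1}$ $y$-exceptional", i.e. "$y$-eligible and $g_{d_{i+1}}>y$"; if $d_{i+1}$ is $y$-eligible but $g_{d_{i+1}}\le y$ then, combined with $g_{d_{i+1},q}>y$, there must be a prime factor of $q/d_{i+1}$ whose presence destroys the spanning — I would instead phrase the iteration so that at each stage I carry a divisor $e\mid q$ with $g_{d_i, e}>y$ and $d_i\mid e$, shrinking $e$ toward $d_i$; once $e=d_i$ we have $g_{d_i}>y$. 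So the invariant to maintain is: a pair $(d_i,e_i)$ with $d_i\mid e_i\mid q$, $d_i$ proper in $q$, and $g_{d_i,e_i/d_i\text{-part}}>y$. This bookkeeping is the one genuinely fiddly point, and I expect it to be the main obstacle; everything else is monotonicity.

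Finally, the two refinements in the statement. For the sharper bound $\min\{y,p-1\}$ where $p$ is the least prime of $q$: whenever we invoke $y$-ineligibility to descend, the witness number realizing a missing sign can be taken to be a product of primes all at least $p$; but more to the point, any integer in $U_{d}(m)$ with $m<p$ that is coprime to $d$ is coprime to $q$ as well (since $q$'s prime factors are all $\ge p>m$), so for thresholds below $p$ the conditions $g_{d}$, $g_{d,q}$, and $g_{d,e}$ all coincide, which lets us replace $y$ by $\min\{y,p-1\}$ throughout the descent without changing anything. This gives a divisor $d$ that is $\min\{y,p-1\}$-exceptional. The "in particular" clause is then immediate: if every prime factor of $q$ exceeds $y+1$ then $p\ge y+2>y+1$, so $p-1>y$ and $\min\{y,p-1\}=y$, yielding a $y$-exceptional proper divisor $d$.
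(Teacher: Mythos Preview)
Your iterative descent hits exactly the snag you identify: at level $y$, the case where $d_{i+1}$ is $y$-eligible but $g_{d_{i+1}}\le y$ cannot be dismissed, and your proposed fix of carrying an auxiliary pair $(d_i,e_i)$ and ``shrinking $e$ toward $d_i$'' is not spelled out enough to be checked --- I do not see how to make that shrinking work in general. The actual fix is already sitting in your final paragraph, but you present it as a refinement rather than as the resolution of the gap: since $q$ is $y$-ineligible it is a fortiori $m$-ineligible for $m=\min\{y,p-1\}$, and at any threshold $m<p$ your observation that $U_d(m)=U_{d'}(m)=U_q(m)$ (all prime factors of $q$ exceed $m$) makes $g_d$, $g_{d,d'}$, $g_{d,q}$ coincide for $d\mid d'\mid q$. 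At level $m$ the problematic third case therefore cannot occur, the descent runs cleanly, and terminates (at the latest at a prime, which is always eligible) with an $m$-exceptional proper divisor. So your plan can be salvaged, but only once the two halves are wired together.

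The paper sidesteps the iteration entirely by taking the \emph{minimal} divisor $d_0$ of $q$ with $g_{d_0,q}>y$. Minimality together with your monotonicity $g_{d',q}\ge g_{d',d_0}$ forces $d_0$ to be $y$-eligible in a single stroke. Then either $g_{d_0}>y$ and $d_0$ is $y$-exceptional, or else $g_{d_0}\le y<g_{d_0,q}$, which via $U_{d_0}(p-1)\subseteq U_q(y)$ gives $g_{d_0}>p-1$; one further minimal-divisor step at level $p-1$ (where again $U_{d_0}(p-1)=U_{d_1}(p-1)$) produces the required $(p-1)$-exceptional divisor. Both arguments ultimately rest on the same level-$(p-1)$ coincidence you spotted, but the paper's ``take the least counterexample'' device replaces your descent-plus-bookkeeping with two clean steps.
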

\begin{proof}
Since $q$ is $y$-ineligible it has a divisor $d$ such that $g_{d,q}>y$. Let \[d_0=\min\{d:d|q,\ 1<d<q,\ g_{d,q}>y\}.\] Then $d_0$ is $y$-eligible. Indeed, if $d_0$ were not $y$-eligible, it would have a proper divisor $d'$ for which $g_{d',d_0}>y$. But then, since $d_0|q$, we have $g_{d',q}\geq g_{d',d_0}>y$ contradicting the minimality of $d_0$. So $d_0$ is $y$-eligible but $g_{d_0,q}>y$. Either $g_{d_0}>y$ as well and so $d_0$ is $y$-exceptional, or else $g_{d_0}\leq y$. In the latter case, the vectors $\theta_{d_0}(n)$ with $n\in U_{d_0}(y)$ generate the full group of signs for $d_0$, but those with $n\in U_q(y)$ do not. However, all of the elements of $U_{d_0}(y)\setminus U_q(y)$ are divisible by some prime which is at least as big as $p$, and so $g_{d_0}>p-1$. It may now be the case, since $p-1<y$, that $d_0$ is not $p-1$-eligible. If so, let \[d_1=\min\{d:d|d_0,\ 1<d<d_0,\ g_{d,d_0}>p-1\}.\] As before, $d_1$ is $p-1$-eligible and $g_{d,d_0}>p-1$. But in fact, since $d_0$ is only divisible by primes greater than $p$, $U_{d_0}(p-1)=U_{d_1}(p-1)$, and $g_{d_1}>p-1$. Hence $d_1$ is $p-1$-exceptional. The second statement of the lemma follows immediately.
\end{proof}

\begin{Lemma}\label{Exceptional2}
If $q$ is an odd, square-free integer which is $(\log q)^a$-ineligible, and if all prime factors of $q$ are at least $2(\log q)^a$, then $q$ has a divisor $d$ which is $(\log d)^a$-exceptional.
\end{Lemma}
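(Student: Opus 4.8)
The goal is to upgrade Lemma~\ref{Exceptional} from producing a $(\log q)^a$-exceptional divisor to producing a divisor $d$ that is exceptional \emph{relative to its own size}, i.e.\ $(\log d)^a$-exceptional. Set $y=(\log q)^a$ for brevity. Since $q$ is $y$-ineligible and every prime factor of $q$ exceeds $2(\log q)^a = 2y > y+1$, the second statement of Lemma~\ref{Exceptional} applies directly and yields a proper divisor $d$ of $q$ which is $y$-exceptional, that is, $g_d > y = (\log q)^a$.

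Now I would argue that this same $d$ is in fact $(\log d)^a$-exceptional. First, $d$ is $(\log d)^a$-eligible: every divisor $d'$ of $d$ is a divisor of $q$, and since $q$ is odd and square-free, $d' < d \le q$; moreover $(\log d)^a \le (\log q)^a = y$, so if $d$ failed to be $(\log d)^a$-eligible it would have a proper divisor $d'$ with $g_{d',d} > (\log d)^a$, but one must be careful that this does not immediately contradict anything—so instead I would invoke eligibility monotonicity more carefully. The cleaner route: running the proof of Lemma~\ref{Exceptional} produced a \emph{minimal} $y$-eligible divisor $d_0$ with $g_{d_0,q}>y$, and then (in the case $g_{d_0}\le y$) descended to a $(p-1)$-exceptional divisor $d_1$ with $g_{d_1}>p-1$, where $p$ is the least prime factor of $q$. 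Since all prime factors of $q$ are $\ge 2(\log q)^a = 2y$, we have $p - 1 \ge 2y - 1 > y \ge (\log q)^a \ge (\log d)^a$ for the relevant divisor $d$. So whichever divisor $d$ the lemma hands us, it satisfies $g_d > y \ge (\log d)^a$, giving $(\log d)^a$-exceptionality \emph{provided} $d$ is $(\log d)^a$-eligible.

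The eligibility of $d$ is the point that needs care, and is the main obstacle. I would establish it as follows: $d$ is $y$-eligible (shown in the proof of Lemma~\ref{Exceptional}, as $d_0$ or $d_1$ there is constructed to be eligible at the relevant threshold, and in each case that threshold is $\ge (\log d)^a$ since $p-1 > y \ge (\log d)^a$). Thus for every proper divisor $d'$ of $d$ with $1 < d' < d$ we have $g_{d',d} \le y$. But I need $g_{d',d} \le (\log d)^a$, which is a \emph{smaller} bound, so this does not follow formally. Here is where the hypothesis that all prime factors are $\ge 2(\log q)^a$ does real work: since $d$ is square-free with all prime factors $> (\log d)^a + 1$, one has $U_{d'}((\log d)^a) = U_d((\log d)^a)$ for every proper divisor $d'$ of $d$ (no integer below $(\log d)^a$ can be divisible by a prime factor of $d$, so coprimality to $d'$ versus to $d$ is the same constraint on $U(\cdot)$). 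Consequently $g_{d',d} = g_{d',d'} \le g_{d'}$, and inductively (on the number of prime factors, using that every proper divisor $d'$ of $d$ inherits the hypothesis) one reduces to showing each such $d'$ satisfies $g_{d'} \le (\log d)^a$, which would follow if $d'$ were not $(\log d')^a$-exceptional. If it were, we could replace $d$ by $d'$ and repeat; the descent terminates since the number of prime factors strictly decreases, and a prime is always eligible. Taking the minimal such divisor in the descent gives a $(\log d)^a$-exceptional (hence in particular $(\log d)^a$-eligible) divisor, completing the proof.

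In short: apply Lemma~\ref{Exceptional} to get a $y$-exceptional proper divisor; observe $g_d > y \ge (\log d)^a$; then either $d$ is already $(\log d)^a$-eligible—so $(\log d)^a$-exceptional and we are done—or descend to a smaller divisor $d'$, which still has all prime factors $\ge 2(\log q)^a \ge 2(\log d')^a$ and still fails the relevant eligibility, and iterate. The key structural fact enabling the descent is that the large lower bound on the prime factors forces $U_{d'}(z) = U_d(z)$ for all $z \le (\log d)^a$ and all divisors $d'\mid d$, so the auxiliary quantities $g_{\cdot,\cdot}$ collapse to the plain $g_\cdot$.
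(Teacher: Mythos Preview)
Your approach is essentially the same as the paper's: obtain a $(\log q)^a$-exceptional proper divisor $d$ from Lemma~\ref{Exceptional}, note that $g_d>(\log q)^a\ge(\log d)^a$, and then descend---either $d$ is already $(\log d)^a$-eligible (hence $(\log d)^a$-exceptional), or one passes to a smaller divisor and repeats, terminating at a prime (which is always eligible).

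The paper's execution is cleaner than yours in one respect: at each stage it simply re-applies Lemma~\ref{Exceptional} to the current $d_j$ with threshold $(\log d_j)^a$ (the prime-factor hypothesis persists since $2(\log q)^a\ge 2(\log d_j)^a>(\log d_j)^a+1$), immediately producing a $(\log d_j)^a$-exceptional $d_{j+1}$ with $g_{d_{j+1}}>(\log d_j)^a\ge(\log d_{j+1})^a$. Your route instead argues via the equality $U_d(z)=U_{d'}(z)$ for $z\le(\log d)^a$ to transfer $g_{d',d}>(\log d)^a$ to $g_{d'}>(\log d)^a$; this is valid and yields the same descent, but it is an unnecessary detour given that Lemma~\ref{Exceptional} already packages this step. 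Two small slips to clean up: the displayed chain ``$g_{d',d}=g_{d',d'}\le g_{d'}$'' has the inequality the wrong way (in general $g_{d'}\le g_{d',d}$; what you actually use is the equivalence $g_{d',d}>(\log d)^a\iff g_{d'}>(\log d)^a$ forced by the $U$-set equality), and the claim ``$g_{d'}\le(\log d)^a$ would follow if $d'$ were not $(\log d')^a$-exceptional'' is not quite right, since non-exceptionality could also mean ineligibility rather than $g_{d'}$ being small. Your final summary paragraph, however, states the descent correctly.
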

\begin{proof}
Since $q$ is not $(\log q)^a$-eligible then it has a divisor $d_1$ which is $(\log q)^a$-exceptional, by Lemma \ref{Exceptional}. Either $d_1$ is also $(\log d_1)^a$-exceptional (and we are done) or else it is not $(\log d_1)^a$-eligible. In the latter case it has a proper divisor $d_2$ which is either $(\log d_2)^a$-exceptional, or else not $(\log d_2)^a$-eligible. Continuing in this fashion, we must arrive at a divisor $d$ of $q$ which is $(\log d)^a$-exceptional. Indeed, eventually we would either stop or arrive at a prime $p$, and this prime is $(\log p)^a$-exceptional since all primes are $y$-eligible for all $y>1$.
\end{proof}

\begin{proof}[Proof of Corollary \ref{Counting}]
Assume $\eps<1/2$ without any loss of generality and let $a$ be a positive number at our disposal, which will depend only on $\eps$. Let $\cB$ be the set of all numbers in $[Q,Q+Q^\eps]$ which have no prime factors smaller than $2(\log Q)^a$. By Theorem \ref{BrunSieve}, we have \[|\cB|\sim Q^\eps\cdot V(2(\log Q)^a).\] Now, by Mertens' theorem,
\[-\log V(z)=\sum_{p<z}-\log(1-p^{-1})\leq \sum_{p<z}\frac{1}{p-1}=\log\log z+O(1).\] So
\[V(2(\log Q)^a)\gg \frac{1}{a\log\log Q}\] and hence
\[|\cB|\gg \frac{Q^\eps}{a\log\log Q}.\]

Let $\cB'\subseteq \cB$ be the subset of all elements of $q\in \cB$ which have a square-free radical $r=\prod_{p|q}p$ satisfying $g_r>(\log r)^a$. For such $q$, $r$ is either $(\log r)^a$-exceptional or $(\log r)^a$-ineligible. Now, all prime factors of $r$ exceed $2(\log r)^a$. So, if $r$ is $(\log r)^a$-ineligible, it follows from Lemma \ref{Exceptional2} that $r$ is divisible by a number $d$ which is $(\log d)^a$-exceptional. In either case, $q$ has a divisor $d$ which is square-free and $(\log d)^a$-exceptional. Moreover, this divisor $d$ satisfies $d\geq (\log Q)^a$ since it is a non-empty product of primes exceeding $(\log Q)^a$. Hence every integer in $\cB'$ is divisible by some $d\in \cQ(2Q,a)$ which is at least $(\log Q)^a$. It follows that the size of $\cB'$ is at most
\[\sum_{\substack{d\geq (\log Q)^a\\ d\in \cQ(2Q,a)}}\lr{\frac{Q^\eps}{d}+O(1)}\ll Q^\eps\int_{(\log Q)^a}^{2Q}\frac{\cQ(u,a)}{u^2}du+O\lr{\cQ(2Q,a)}\] by partial summation. We apply our Main Theorem with $\delta=\eps/4$. The integral is at most
\[O_{\eps}(1)\cdot Q^\eps \int_{(\log Q)^a}^\infty u^{2/a-2+\eps/4}du\ll_{\eps,a}\frac{Q^\eps}{(\log Q)^{a(1-\eps/4)-2}}\ll_{\eps, a}\frac{Q^\eps}{(\log Q)^{a/2}},\]
while
\[\cQ(2Q,a)\ll_\eps (2Q)^{2/a+\eps/4}\ll_{\eps} Q^{\eps/2}\] for $a$ sufficiently large in terms of $\eps$.
Thus once $a$ is sufficiently large in terms of $\eps$ and $Q$ is sufficiently large in terms of $a$, we have $|\cB\setminus \cB'|\gg_{\eps}\frac{Q^\eps}{\log\log Q}$. The integers in $\cB\setminus \cB'$ all have a square-free radical $r$ with $g_r\leq(\log r)^a$.
\end{proof}

\begin{proof}[Proof of Corollary \ref{QuadraticForm}]
We can assume $\eps$ is small without any loss of generality. By Theorem \ref{Discriminant}, it is enough to find some number $q\in[Q,Q+Q^\eps]$ and a discriminant $-d$ which is a square modulo $4q$. If $(d,4q)=1$, then in order for $-d$ to be a square modulo $4q$, it suffices that
\begin{enumerate}
\item $d\equiv 7\mod 8$,
\item $\leg{d}{p}=1$ for $p|q$ and $p\equiv1\mod 4$,
\item $\leg{d}{p}=-1$ for $p|q$ and $p\equiv3\mod 4$.
\end{enumerate}

By Corollary \ref{Counting}, there is a positive number $a$ such that the number of integers in $q\in[Q,Q+Q^\eps]$ with $g_q\leq (\log q)^a$ is at least
\[c_\eps Q^\eps(\log\log Q)^{-1}\] for some constant $c_{\eps}$ depending only on $\eps$.  By Lemma \ref{PrimeFactors}, one of these numbers will have $\omega(q)\leq (\log\log Q)^K$ for some $K$ sufficiently large in terms of $\eps$. Let $p_0$ be the smallest positive prime which is congruent to $7$ modulo $8$ and which does not divide $q$. Then, since $q$ has $k=\omega(q)\leq (\log\log Q)^K$ prime factors, $p_0$ is at most the $k+1$'th prime congruent to $7\mod 8$ which is at most $O(k\log k)=O_\eps(Q^{\eps/2})$ by the Prime Number Theorem. Since 
\[n_q\leq g_q^{\omega(q)}\leq (\log q)^{a(\log\log q)^K}=O_\eps(Q^{\eps/2}),\] we can find an integer $d_0\ll_{\eps} Q^{\eps/2}$ which is relatively prime to $q$, congruent to $1$ modulo $8$ so that $d_0p_0=7\mod 8$, and such that $\leg{d_0p_0}{p}$ is prescribed as needed for each $p$ dividing $q$. The number $d=d_0p_0$ satisfies the desired properties and the corollary is proved.
\end{proof}

Recall that a binary quadratic form $q(x,y)=Ax^2+Bxy+Cy^2$ of discriminant $-d$ is called \emph{reduced} if $|B|\leq A\leq C$. In this case $d=4AC-B^2\geq 3AC$, so that all coefficients are bounded by $d$. We say two quadratic forms $q_1(x,y)$ and $q_2(x,y)$ are equivalent if one can be obtained from the other by an invertible, integral change of variables. To be precise, we have $q_2(\alpha x+\beta y,\gamma x+\delta y)=q_1(x,y)$ for some integers $\alpha,\beta,\gamma,\delta$ with $\alpha \delta-\beta \gamma=1$. It is clear that equivalent forms represent the same numbers. This, combined with the following theorem, \cite[Theorem 2.3]{B}, shows that there is no loss of generality in working with reduced forms. 

\begin{Theorem}
Every binary quadratic form of discriminant $-d$ is equivalent to a reduced form of the same discriminant.
\end{Theorem}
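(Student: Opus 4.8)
The statement is the classical reduction theory for positive-definite binary quadratic forms, and the plan is to prove it by the standard descent inside $SL_2(\ZZ)$. Since the discriminant $-d$ is negative the form is definite; replacing $q$ by $-q$ if necessary (which changes neither the discriminant nor the notion of equivalence) we may assume $q(x,y)=Ax^2+Bxy+Cy^2$ is positive-definite, so that $A>0$ and $C>0$.

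First I would record the effect on the coefficient triple $(A,B,C)$ of the two elementary integral unimodular substitutions. The \emph{translation} $(x,y)\mapsto(x+ny,y)$, with $n\in\ZZ$, fixes the leading coefficient $A$ and replaces $B$ by $B+2nA$ (and $C$ by $An^2+Bn+C$). The \emph{swap} $(x,y)\mapsto(-y,x)$, whose matrix has determinant $1$, sends $(A,B,C)$ to $(C,-B,A)$. A direct computation shows each of these preserves the discriminant $B^2-4AC$, and each clearly preserves positive-definiteness, so $A$ and $C$ stay positive under any composition of such substitutions.

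Next I would run the reduction algorithm. Given the form, pick $n\in\ZZ$ so that $B+2nA\in(-A,A]$ and apply the corresponding translation, so that now $|B|\le A$. If $A\le C$ we stop: the form is reduced. If instead $A>C$, apply the swap; the new leading coefficient equals the old $C$, which is strictly smaller than the old $A$. Then repeat from the translation step. For termination, observe that the leading coefficient is a positive integer, is unchanged by every translation step, and strictly decreases at every swap; hence there are only finitely many swaps, the algorithm halts after finitely many steps, and by construction it can halt only at a form with $|B|\le A\le C$, i.e.\ a reduced form. The composite change of variables is a product of integral unimodular substitutions, hence lies in $SL_2(\ZZ)$, and the discriminant was preserved at each step, so the original form is equivalent to a reduced form of discriminant $-d$. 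The only point needing any attention is the termination bookkeeping, and this is immediate once one isolates the strictly decreasing positive-integer quantity $A$; I do not expect any substantial obstacle.
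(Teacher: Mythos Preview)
The paper does not actually prove this theorem; it is quoted without proof from Buell's book \cite[Theorem 2.3]{B}. Your argument is the standard Gaussian reduction algorithm and is essentially correct: the translation step forces $|B|\le A$, the swap strictly decreases the positive integer $A$ whenever $A>C$, and so the process terminates at a reduced form in the same $SL_2(\ZZ)$-class.

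One small quibble: the parenthetical ``replacing $q$ by $-q$ if necessary (which changes neither the discriminant nor the notion of equivalence)'' is not quite right---$q$ and $-q$ are never $SL_2(\ZZ)$-equivalent when $q$ is definite, since one is positive and the other negative on every nonzero lattice point. In the paper's context the reduced condition $|B|\le A\le C$ already forces $A>0$, so the statement is implicitly about positive-definite forms and the passage to $-q$ is unnecessary. Dropping that sentence, your proof is clean.
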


We are now ready to prove our final corollary.

\begin{proof}[Proof of Corollary \ref{SumsOfSquares}]
By Corollary \ref{QuadraticForm}, we can represent an integer $q$ in the range $Q\leq q\leq Q+Q^\eps$ by some positive definite binary quadratic form $Q$ of discriminant $-d$ with $d$ at most $Q^\eps$. Without loss of generality we may assume this form is reduced, and thus write $q=Ax^2+Bxy+Cy^2$ with $B^2-4AC=-d$ for some $A,B,C$ bounded in absolute value by $Q^\eps$. It follows that $A,C>0$, and by completing the square we see \[q=\frac{1}{4A}\lr{(2Ax+By)^2+dy^2}.\] 
\end{proof}

\section*{Acknowledgment}
We thank the anonymous referee for pointing out errors in the original draft of this article, and for helpful comments on the exposition. Part of this work was carried out while the first and third authors were attending the IPAM reunion conference for the program Algebraic Techniques for Combinatorial and Computational Geometry.

\end{document}